\theoremstyle{plain}
\newtheorem{theorem}{Theorem}
\newtheorem{definition}{Definition}
\newtheorem{proposition}{Proposition}
\newtheorem{example}{Example}
\providecommand{\keywords}[1]
{
  \small	
  \textbf{\textit{Keywords---}} #1
}
\title{A Note On $\ell$-Rauzy Graphs for the Infinite Fibonacci Word}
\author{Rajavel Praveen M and Rama R \\ Department of Mathematics\\ Indian Institute of Technology Madras, Chennai - 600036, India \\ kingspearpraveen@gmail.com, ramar@iitm.ac.in }
\date{}
\begin{document}

\maketitle
\begin{abstract}
    The $\ell$-Rauzy graph of order $k$ for any infinite word is a directed graph in which an arc $(v_1,v_2)$ is formed if the concatenation of the word $v_1$ and the suffix of $v_2$ of length $k-\ell$ is a subword of the infinite word. In this paper, we consider one of the important aperiodic recurrent words, the infinite Fibonacci word for discussion. We prove a few basic properties of the $\ell$-Rauzy graph of the infinite Fibonacci word. We also prove that the $\ell$-Rauzy graphs for the infinite Fibonacci word are strongly connected.
\end{abstract}
\keywords{Infinite words, Infinite Fibonacci word, Rauzy graphs, $\ell$-Rauzy graphs}
\section{Introduction}
Theory of word representable graphs have main applications in Combinatorics, Graph theory, Computer science and Algebra \cite{beigel,cerney,lozin}. This theory was first introduced by S. V. Kitaev and studied in detail \cite{kit} by the motivation of Perkins semigroup in \cite{seif}. A detailed survey is made by S. V. Kitaev and A. V. Pyatkin in \cite{kitpyat}. Fundamental properties of word representable graphs are discussed in \cite{kit,kitsali}. 

A de Bruijn graph of order $m$, is a directed graph with the vertex set $\Sigma ^k$ and $uv$ forms an arc iff $u_2u_3\cdots u_k=v_1v_2\cdots v_{k-1}$. There are many interesting results like \say{For a de Bruijn graph of order $k$ whose $|\Sigma|=2$ with $2^k$ vertices, there exists $2^{2^{k-1}-k}$ different Hamiltonian cycles} \cite{debruijn}. Finding Hamiltonian cycle in a graph is a difficult computational problem, where de Bruijn graph made it easier and it is widely applied in genome assembly \cite{philip}.

In \cite{rauzy}, Gerard Rauzy introduced a new graph by adding some more conditions to de Bruijn graph, called as Rauzy graph. The graph is defined with vertex set $\Sigma ^k\cap L (\text{denote} \Sigma ^k\cap L \text{ by }L(k))$ and an ordered pair $(u,v)$ of vertices form an arc iff $u_2u_3\cdots u_k=v_1v_2\cdots v_{k-1}$ and $uv_k \in L(k+1)$. Here, $L$ denotes the factorial language, and $\Sigma^k$ is the set of all words of length $k$ from the alphabet $\Sigma$. Rauzy graphs are broadly used in finding the complexity of words of finite lengths. Arnoux and Rauzy in 1991, investigated the sequences with complexity $2n+1$. And G. Rote in \cite{rote}, went one step further to Arnoux and Rauzy by constructing the sequences with complexity $2n$ using Rauzy graphs. Then Ali Aberkane in \cite{aliaberkane}, approached similarly the intermediate case of complexity between $n+1$ and $2n$. 

In \cite{frid}, Frid  obtained a description of Rauzy graphs for a wide family of sequences. The author proved that to find the structure of Rauzy graphs for arbitrarily long lengths, it is sufficient to find a fixed number of Rauzy graphs for length bounded by a constant. In \cite{ali}, Ali Aberkane studies the infinite words whose $lim \frac{p(n)}{n}=1$, with the help of Rauzy graphs. Salimov in \cite{salimov}, proved that for a given sequence of strongly connected graphs with maximal in and out degrees equal to $s$, an uniformly recurrent infinite word on $\Sigma$, $|\Sigma| =s$ can be constructed. In the sequence of its Rauzy graphs, there is a subsequence of graphs isomorphic to the stretchings of graphs of the given sequence. In \cite{bal}, Balkov\'a et al. proves that the factor frequency of infinite words whose language is closed under reversal does not exceed $2\Delta C(n)+1$.

Later in \cite{krp}, we introduced a variant of Rauzy graph in which the vertex set is same as the Rauzy graph but any two vertices $(u,v)$ form an arc iff $u[\frac{|u|}{2}+1,|u|]=v[1,\frac{|v|}{2}]$ i.e, half the length of the vertices are matched instead of $|u|-1$ to form an arc. The idea of sharing half the length of vertex was motivated by the encoding procedure of vertices and edges in to DNA strand, proposed by Adleman in \cite{adleman}. Some interesting structural properties of half range Rauzy graphs were studied in \cite{krp}.

In this paper, we generalize the sharing length $\ell$ of suffix/prefix in vertices to form an arc i.e., $(u,v)$ forms an arc iff $u[|u|-\ell+1,|u|]=v[1,\ell]$ and call it as $\ell-$ Rauzy graph. This is the generalization of Rauzy graphs and half range Rauzy graphs. 

In \cite{rote}, G. Rote proved that Rauzy graphs of any recurrent word are strongly connected. But, the $\ell$-Rauzy graph of any recurrent word need not be connected. For example, the $1$-Rauzy graph of order $4$ for an infinite periodic word $xxx\ldots$ is not connected, where $x$ is a primitive word with alphabet size $2$ and the length of $x$ is atleast 4. Also, the $2$-Rauzy graph of order $4$ for the Thue-Morse word (an aperiodic recurrent infinite word) is not connected. We observe that the $\ell$-Rauzy graphs of the infinite Fibonacci word are strongly connected. So, we are interested in proving that the $\ell$-Rauzy graph of any order $k(\in N)$ for the well known infinite Fibonacci word is strongly connected.

Fibonacci word is one of the most studied infinite word in combinatorics on words as it has many combinatorial properties. Fibonacci words are defined by one of the simplest morphisms $\phi: 0\to 01$, and $1\to 0$. Fibonacci word is a Sturmian word whose subword complexity, $\sigma(k)=k+1$. The subword complexity of the Fibonacci word is minimum among all aperiodic recurrent words. Fibonacci words are used to prove optimality of several results such as text algorithms and periodicity of infinite words. The finite Fibonacci words are considered as important as the Fibonacci numbers because of their applications. 

In \cite{chuan2}, Chuan uses Zeckendorf representation to obtain the locations of those subwords whose lengths are Fibonacci numbers $\geq 2$. Later in \cite{chuan}, Chuan obtain the locations of any finite subword of the Infinite Fibonacci word. In \cite{rytter}, Rytter also obtains the location of any finite subword of the Infinite Fibonacci word in a different approach. The locations of any finite subword of the Infinite Fibonacci word can also be known by using the software Walnut. For more details about the Walnut software, one may refer \cite{shallit}. Locations of the subwords plays a vital role in proving that the $\ell$-Rauzy graph of order $k$ for the infinite Fibonacci word is strongly connected for any $k$ and $1\leq \ell \leq k-1$.

\section{Preliminaries}
  In this section, we present few basic and necessary definitions, for more details one can refer\cite{lot,berthe,gary,bondy}. A non empty collection of symbols is an alphabet $\Sigma$. A sequence of finite or infinite symbols from $\Sigma $ forms a word. Length of a word $w$ is the number of letters in $w$, denoted by $l(w)$ and $\Sigma ^*$ is the set of all finite words and $\Sigma ^n$ is the set of all words over $\Sigma $ of length $n$. A word $u$ is a factor of $w=w_1w_2\ldots$, if $u=w_iw_{i+1}\ldots w_{i+k-1}$, and is denoted by $w[i;k]$ for some $i,k\in \mathbbmss{N}$. Here, $w_i$ denotes the symbol in the $i$th position of $w$, and $w[i;k]$ is the word that starts at position $i$ and has length $k$. Any factor $u$ is a prefix(suffix) of $w$ if $w=ux(w=xu)$, $x\in \Sigma^*$.
 
 A set $L \subseteq \Sigma ^*$ is said to be a factorial language if it contains all the subwords of its words. Let $L(k)=L\cap \Sigma ^k$, $L_w$ be the set of all factors of $w$ and $L_w(k)$ be the set of all factors of $w$ of length $k$.
 
  Let $g_n$ be the $n$th Fibonacci word, where $$ g_0=1,~g_1=0,~g_n=g_{n-1}g_{n-2},~n\geq 2. $$ The words $g_n$ are referred to as the finite Fibonacci words. Let $F_n$ be the $n$th Fibonacci number, where $|g_n|=F_n$. The limit $f=\lim\limits_{n\to \infty} g_n$ is called the infinite Fibonacci word. The infinite Fibonacci word is given by $$f=010010100100101001010\ldots$$ whose $n$th letter is $1~(resp.,~0)$ if $\lfloor (n+1)\tau \rfloor -\lfloor n\tau \rfloor=0~(resp.,~1),$ where $\tau=\frac{\sqrt{5}-1}{2}, n\geq 1 $ and the complement of infinite Fibonacci word is $f^c=101101011011010110101\ldots$.
  
A directed graph $G$ is an ordered pair $(V(G),E(G))$ consisting of non empty set $V(G)$ of vertices, a set $E(G)$, disjoint from $V(G)$, of arcs. In a graph $G$, indegree (resp., outdegree) of a vertex $u$ is the number of arcs entering (resp., leaving) $u$ and denoted by $deg_{in}(u)$ (resp., $deg_{out}(u)$). A vertex $u$ is isolated iff $deg_{in}(u)=0=deg_{out}(u)$. 

A directed graph is said to be connected (weakly) if there is a path between any two vertices in its underlying undirected graph. A directed graph is said to be strongly connected if it has a path from each vertex to every other vertex. A loop (or self-loop) is an edge from a vertex to itself. Simple directed graphs are directed graphs that have no loops and no multiple arcs.
\begin{definition}
A de Bruijn graph of order $k>1$ is a directed graph whose vertex set is $\Sigma ^k$ and an arc $uv$ is formed iff
\[ u[2,k]=v[1,k-1] \] 
\end{definition}
Some more conditions on de Bruijn graph were imposed by Rauzy and defined a graph in the following way:
 \begin{definition}
  A Rauzy graph of order $k$ for a factorial language $L$ is a directed graph $(V,E)$ where $V=L(k)$ and $(u,v)\in E$ iff
\[ u_2u_3\cdots u_{k}=v_1v_2\cdots v_{k-1} \qquad and \qquad u_1u_2\cdots u_{k}v_{k}\in L(k+1).\]
\end{definition}
 A Rauzy graph of order $k$ for an infinite word $w$ is the Rauzy graph of order $k$ for the language of subwords of $w$. We denote a Rauzy graph of order $k$ for a factorial language $L$ (for an infinite word $w$) by $R_L(k)$ (correspondingly, $R_w(k)$).
 
 Later, a new graph is defined from Rauzy graph by sharing the suffix of preceding vertex with the prefix of succeeding vertex by half the length of its vertices \cite{krp}.
 \begin{definition}
An \lq Half range Rauzy graph\rq (or and HRR-graph in short) of order $k>1$, for a factorial language $L$ is a directed graph $(V,E)$, where $V=L(k)$ and arc set is defined as follows:
\begin{enumerate}
    \item For an even $k$, $(u,v)\in E$ iff

$u_{\frac{k}{2}+1}u_{\frac{k}{2}+2}\cdots u_k=v_1v_2\cdots v_{\frac{k}{2}}~and~u_1u_2 \cdots u_kv_{\frac{k}{2}+1}\cdots v_k \in L(\frac{3k}{2}).$
\item For an odd $k$, there are two types of graphs, $(u,v) \in E$ iff

Type I: $u_{\frac{k+1}{2}}\cdots u_k = v_1\cdots v_{\frac{k+1}{2}}~~and~~
 u_1u_2 \cdots u_kv_{\frac{k+3}{2}} \cdots v_k \in L(\frac{3k-1}{2}) $
 
Type II: $u_{\frac{k+3}{2}}\cdots u_k=v_1\cdots v_{\frac{k-1}{2}} ~~ and ~~ u_1u_2\cdots u_kv_{\frac{k+1}{2}} \cdots v_k \in L(\frac{3k+1}{2}) $
\end{enumerate}
denoted by, $\mathbbmss{HR}_L(k,*)=\begin{cases}
\mathbbmss{HR}_L(k) & if ~k~ is~ even\\
\mathbbmss{HR}_L(k,I) & if~ k~ is~ odd~and~Type~I\\
\mathbbmss{HR}_L(k,II) & if~ k~ is~ odd~and~Type~II
\end{cases}$
 \end{definition}
 
 If the underlying language is set of all factors of a given word $w$, then $\mathbbmss{HR}_L(k,*)$ is simply represented as $\mathbbmss{HR}_w(k,*)$.

\section{The $\ell$-Rauzy graph}
Though, we were motivated by Adleman in \cite{adleman}, by matching half the length of DNA strands, the sharing length of suffix and prefix among the vertices made a difference in Rauzy graph and HRR (which is shown in \cite{krp}). Now, we are interested in the question \say{what if, we match an arbitrary length $1\leq \ell \leq k-1$ of suffix/prefix word among the vertices in a graph to form an arc?} On answering this question, a new graph $\ell-$Rauzy graph is defined as follows and its properties are studied.
\begin{definition}
  An $\ell$-Rauzy graph of order $k$ for a factorial language $L$ is a directed graph $(V,E)$ where $V=L(k)$ and any two vertices $u,v$ forms an edge i.e. $(u,v)\in E$ iff
\[ u_{k-\ell+1}u_{k-\ell+2}\cdots u_{k}=v_1v_2\cdots v_{\ell} ~ and ~ u_1u_2\cdots u_{k}v_{\ell+1}v_{\ell+2}\cdots v_{k} \in L(2k-\ell)\]
is denoted by $\ell$-$\mathbbmss{R}_L(k)$.
\end{definition}

 An $\ell$-Rauzy graph of order $k$ for an infinite word $w$ is the $\ell-$Rauzy graph of order $k$ for the language of subwords of $w$ and denoted by $\ell$-$\mathbbmss{R}_w(k)$.
 
\begin{example}
The $\ell$-Rauzy graphs of order $4$ for the word $w=010010010\ldots $ are directed graphs with vertex set $V_1=\{v_1=0100, ~v_2=1001,~ v_3=0010 \}$, and the arc set varies for various $\ell$. The graph of $\ell$-$R_w(4)$ is shown in Figure \ref{fig:1}.
\begin{figure}
    \centering
    \includegraphics[scale=.25]{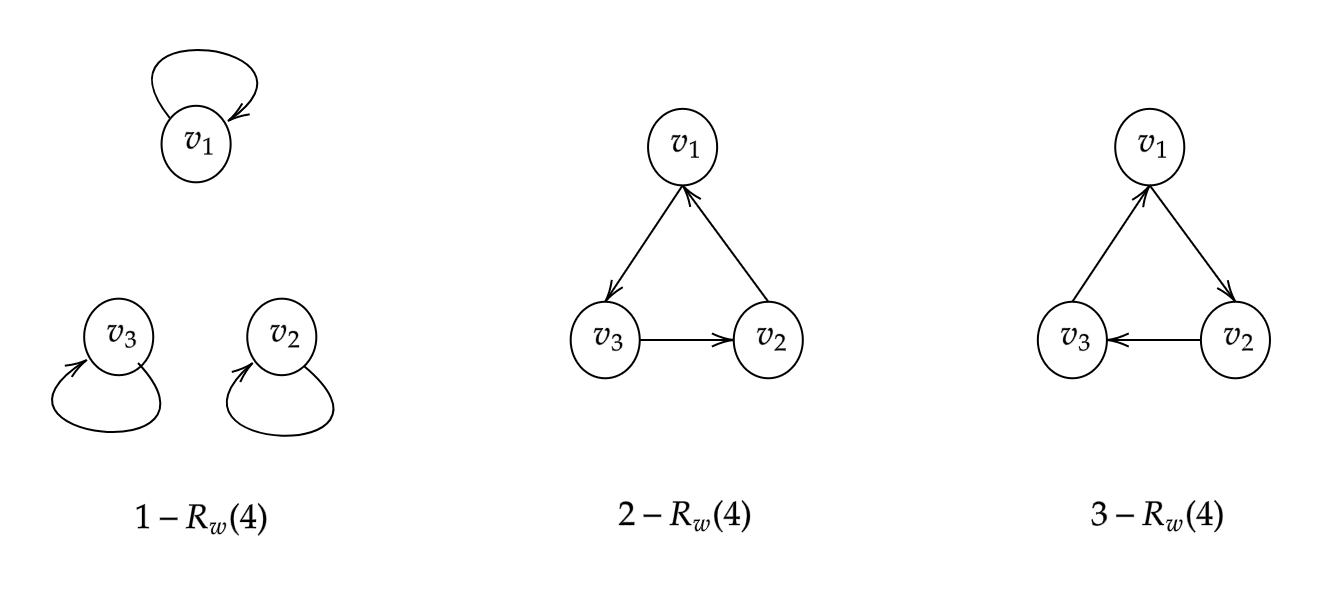}
    \caption{$\ell$-Rauzy graphs of order 4 for the word $w$}
    \label{fig:1}
\end{figure}
\end{example}
\begin{example}
$\ell$-Rauzy graphs  of order $4$ for the infinite Fibonacci word $f$ are directed graphs with vertex set $V_2=\{u_1=0100, ~u_2=1001,~ u_3=0010,~u_4=1010 \}$. For various $\ell$, graphs of $\ell$-$R_w(4)$ are shown in Figure \ref{fig:2}.
\begin{figure}
    \centering
    \includegraphics[scale=.25]{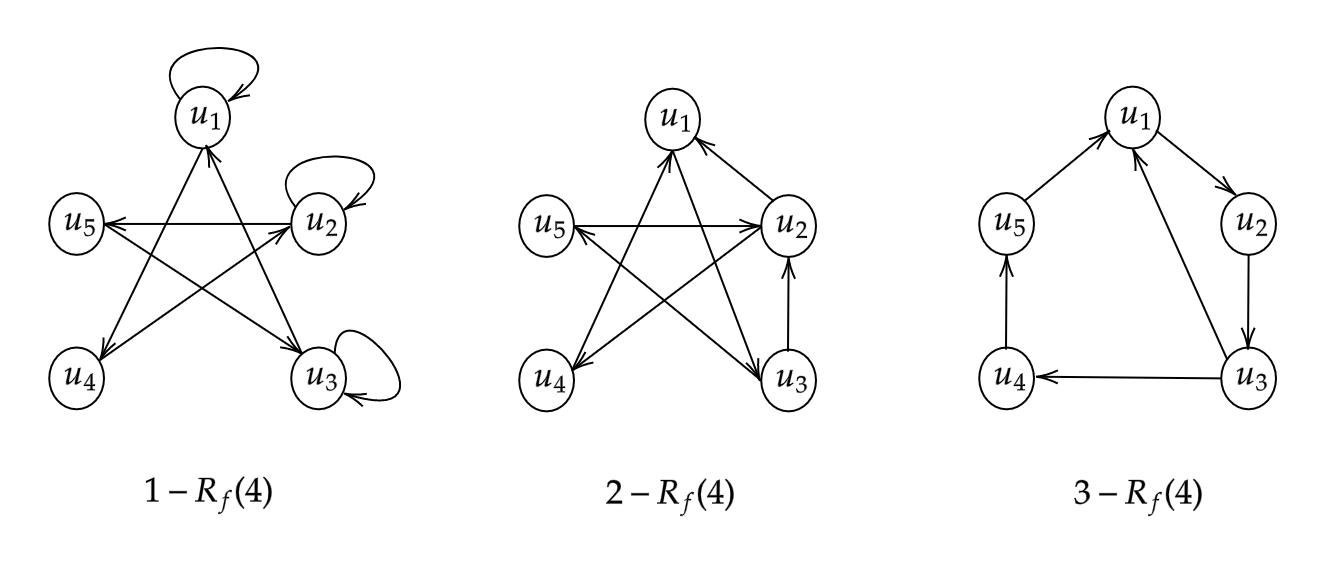}
    \caption{$\ell$-Rauzy graphs of order 4 for the infinite Fibonacci word $f$}
    \label{fig:2}
\end{figure}
\end{example}
\begin{example}
The $2$- Rauzy graph of order $4$ for Thue-Morse infinite word $T$ is a directed graph with vertex set $V_3=\{ v'_1=0110,~v'_2=1101,~v'_3=1010,~v'_4=0100,~v'_5=1001,~v'_6=0011,~v'_7=1100,~v'_8=0010,~v'_9=0101,~v'_{10}=1011 \}$. The graph $2$-$R_T(4)$ is shown in Figure \ref{fig:3}.
\begin{figure}
    \centering
    \includegraphics[scale=0.25]{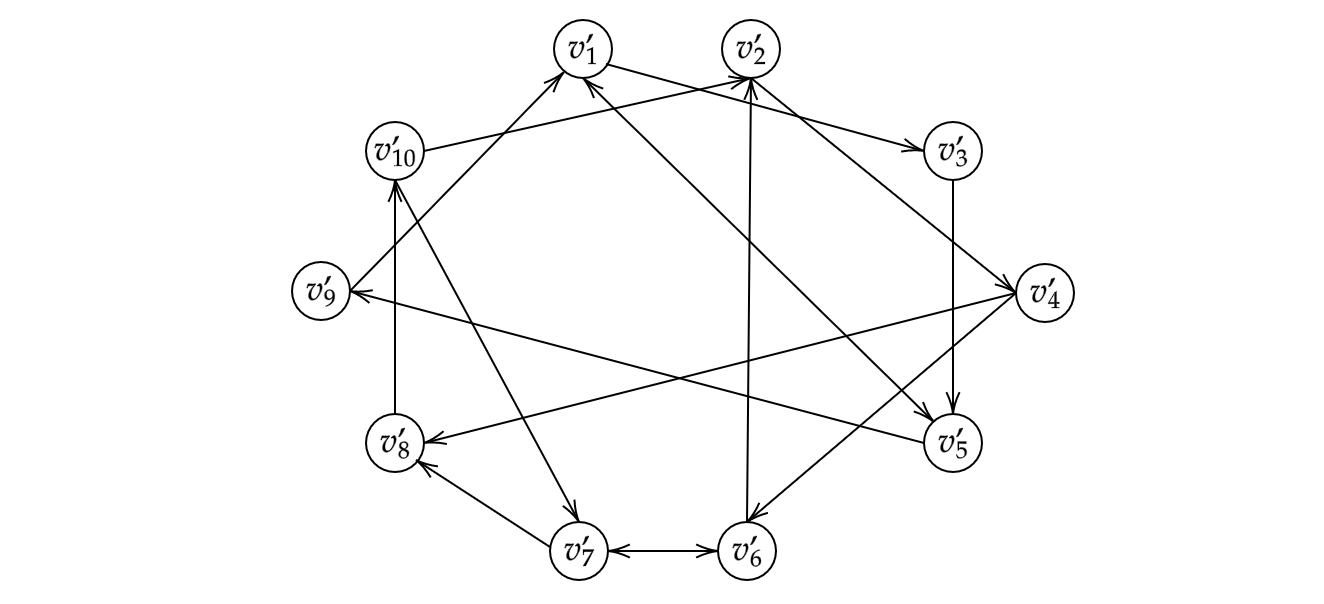}
    \caption{$2$-Rauzy graph of order 4 for the Thue-Morse infinite word $T$}
    \label{fig:3}
\end{figure}
\end{example}

\section{Properties of $\ell$-$\mathbbmss{R}_f(k)$ for the Infinite Fibonacci word}\label{sec4}

In this section, we discuss a few basic properties of $\ell$-Rauzy graph for the infinite Fibonacci word.

By definition of $\ell$-Rauzy graph of order $k$, the set of vertices is the set of all subwords of length $k$ in the factorial language $L_f(k)$ of infinite Fibonacci word $f$. The subword complexity of fibonacci infinite word is well known and there are $n+1$ number of subwords of length $n$. Therefore, the number of vertices in $\ell$-$\mathbbmss{R}_f(k)$ is given by
  $$ |V(\ell\text{-}\mathbbmss{R}_w(k))|= k+1.$$
  By definition of $\ell$-Rauzy graph for order $k$, the set of arcs is the set of all subwords of length $2k-\ell$  in the factorial language $L_f(2k-\ell)$ of infinite Fibonacci word $f$. As there are $2k-\ell+1$ subwords of length $2k-\ell$, the number of arcs in $\ell$-$\mathbbmss{R}_f(k)$ is given by

 $$|E(\ell\text{-}\mathbbmss{R}_f(k))|=2k-\ell+1.$$
 The following proposition ensures that none of the vertices of $\ell$-Rauzy graph for the infinite Fibonacci word is isolated.
   \begin{proposition}
     For each vertex $v$ in $\ell$-Rauzy graph for the infinite Fibonacci word, $deg_{in}(v)\geq 1$ and $deg_{out}(v)\geq 1$.
   \end{proposition}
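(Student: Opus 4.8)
The plan is to exploit the recurrence of the infinite Fibonacci word: every factor of $f$ occurs infinitely often, and in particular at arbitrarily large positions. I would first record this as the only external fact needed, since $f$ is a Sturmian word and hence (uniformly) recurrent. Fix a vertex $v = v_1 v_2 \cdots v_k \in L_f(k)$; the goal is to produce one out-neighbour and one in-neighbour of $v$, each read off from a single factor of length $2k-\ell$.

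For $deg_{out}(v)\geq 1$, note that since $v$ is a factor of the \emph{infinite} word $f$, any occurrence of $v$ extends to the right: there is a factor $W = f[i; 2k-\ell] = v_1 \cdots v_k w_{k+1} \cdots w_{2k-\ell} \in L_f(2k-\ell)$ beginning with $v$. I would then take $v'$ to be the length-$k$ suffix of $W$, namely $v' = W[k-\ell+1; k]$. By construction $v'\in L_f(k)$ is a vertex, its length-$\ell$ prefix $v'_1\cdots v'_\ell$ equals $v_{k-\ell+1}\cdots v_k$ (the length-$\ell$ suffix of $v$), and the overlap-concatenation $v_1\cdots v_k v'_{\ell+1}\cdots v'_k$ recovers exactly $W\in L_f(2k-\ell)$. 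Hence $(v,v')$ satisfies both clauses of the arc definition.

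For $deg_{in}(v)\geq 1$ the argument is symmetric, but here recurrence is genuinely needed, since I must place $v$ at a position with at least $k-\ell$ letters to its left. Recurrence supplies an occurrence of $v$ at some position $\geq k-\ell+1$, so there is a factor $W' = f[j; 2k-\ell]$ whose length-$k$ suffix is $v$, i.e. $W'[k-\ell+1;k]=v$. Taking $u$ to be the length-$k$ prefix $W'[1;k]$ yields a vertex whose length-$\ell$ suffix $u_{k-\ell+1}\cdots u_k$ coincides with $v_1\cdots v_\ell$, while the concatenation $u_1\cdots u_k v_{\ell+1}\cdots v_k$ reassembles $W'\in L_f(2k-\ell)$; thus $(u,v)$ is an arc.

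The only real obstacle is the in-degree half, and it is mild: one must rule out the degenerate possibility that the sole occurrence of $v$ sits flush against the start of $f$, which recurrence precisely forbids. After that, both directions reduce to index bookkeeping on the ranges $[1;k]$, $[k-\ell+1;k]$ and $[k+1;2k-\ell]$ inside one length-$(2k-\ell)$ window, which I would verify once and reuse. I would close by remarking that the argument uses nothing about $f$ beyond recurrence, so the statement holds for every recurrent infinite word.
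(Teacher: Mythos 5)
Your proposal is correct and takes essentially the same route as the paper: the out-neighbour is read off the right-extension of an occurrence of $v$ inside $f$, and recurrence is invoked to find an occurrence of $v$ with at least $k-\ell$ letters to its left, from which the in-neighbour is read off. If anything, your index bookkeeping (shift by $k-\ell$, overlap of length $\ell$) is the one consistent with the definition of the $\ell$-Rauzy graph; the paper's displayed out-neighbour $u'=x_{i+\ell}\cdots x_{i+k+\ell-1}$ shifts the window by $\ell$ rather than $k-\ell$, which appears to be a slip rather than a different argument.
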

   \begin{proof}
Let $v$ be a word $x_ix_{i+1}\cdots x_{i+k-1}$ of length $k$. As the infinite Fibonacci word is recurrent, there exist a $$u=\begin{cases}x_{i-k+\ell}x_{i-k+\ell+1}\cdots x_{i-1}x_ix_{i+1}\cdots x_{i+\ell-1}& \text{for $i>k-\ell$}\\
x_{j-k+\ell}x_{j-k+\ell+1}\cdots x_{j-1}x_jx_{j+1}\cdots x_{j+\ell-1}&\text{for $i\leq k-\ell$}
\end{cases}$$ where $x_ix_{i+1}\cdots x_{i+k-1}=x_jx_{j+1}\cdots x_{j+k-1}$, for some $j>i+k-\ell $ and $$u'=x_{i+\ell}\cdots x_{i+k-1}x_{i+k}x_{i+k+1}\cdots x_{i+k+\ell-1}$$ such that $(u,v),(v,u')\in E(\ell$-$\mathbbmss{R}_w(k))$. Hence $deg_{in}(v)\geq 1$ and $deg_{out}(v)\geq 1$.
   \end{proof}
   For given $k$ and $\ell$, the indegree and outdegree of any vertex in $\ell$-$R_f(k)$ can be known explicitly. Let $k=F_{n+1}-1$ and $F_{n-1}\leq k-\ell \leq F_n$. In $\ell$-$R_f(F_{n+1}-1)$, any vertex $v_j$ that forms an arc with $v_i$ is given by $$  v_i \rightarrow 
\begin{cases}
v_{i+(k-\ell)} & \text{for} 1\leq i \leq F_{n+1}-(k-\ell ) \\
v_{i+(k-\ell)-F_{n+1}} & \text{for} F_{n+1}-(k-\ell)+1 \leq i \leq F_{n+1} \\
v_{i+(k-\ell)-F_n} & \text{for} F_n-(k-\ell)+1 \leq  i \leq F_n
\end{cases} $$
The total number of arcs listed above are $(F_{n+1}-(k-\ell))+(k-\ell)+(k-\ell) = F_{n+1}+(k-\ell)=2k-\ell+1$. The indegree and outdegree of any vertex can be known from the Figure \ref{fig:4}. 

For gievn $k=F_{n+1}-1$, $k-\ell <F_{n-1}$ and $2(k-\ell)<F_n$, there exist no vertex $v_i$ in the graph $\ell$-$R_f(F_{n+1}-1)$ such that $deg_{in}(v_i)=2=deg_{out}(v_i)$.
\begin{figure}
    \centering
    \includegraphics[scale=0.25]{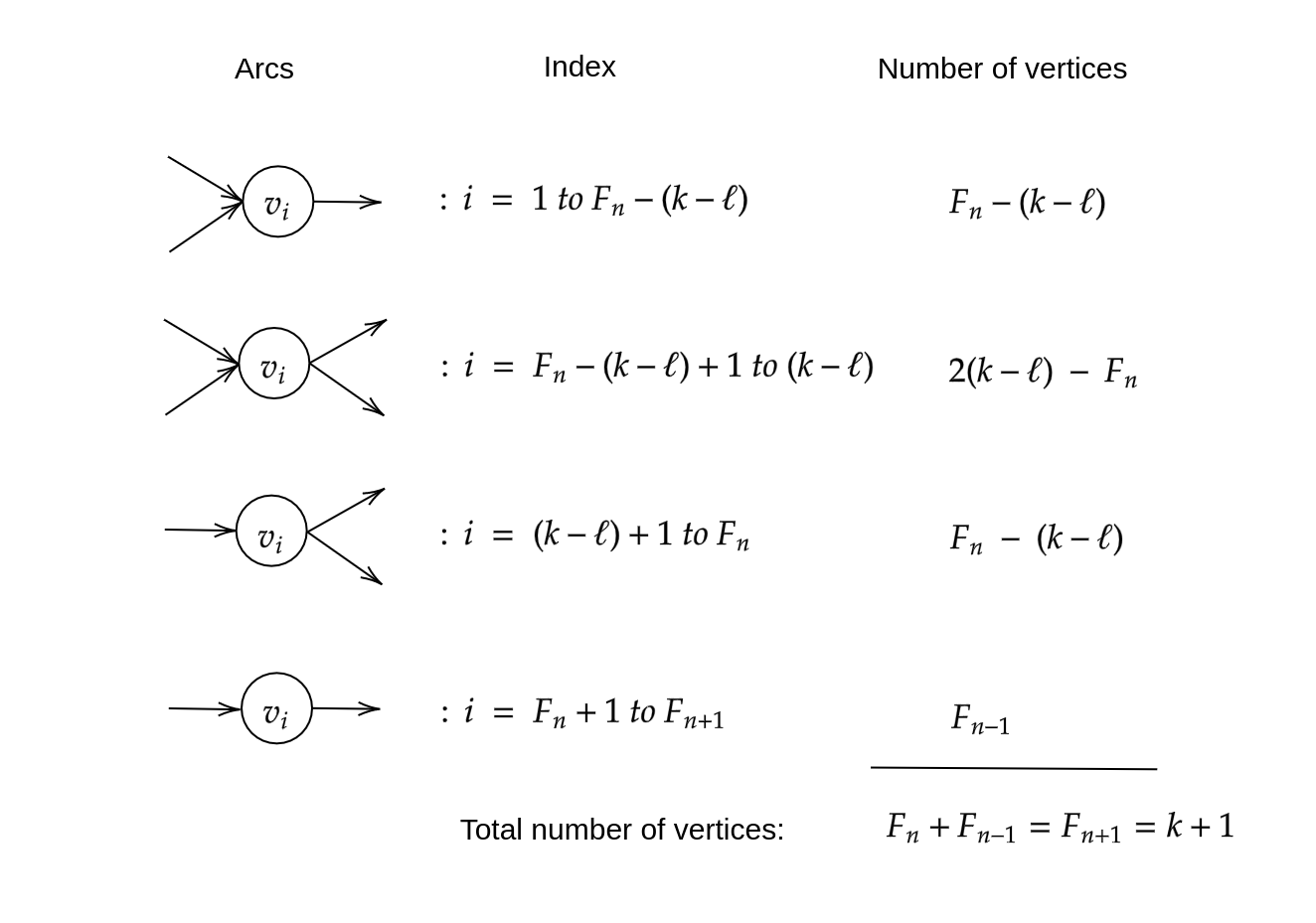}
    \caption{Indegree and out degree of any vertex $v_i$}
    \label{fig:4}
\end{figure}

   The $\ell$-Rauzy graph of order $k$ for the infinite Fibonacci word $f$ is isomorphic to the $\ell$-Rauzy graph of order $k$ for the complement of infinite Fibonacci word $f^c$. It is proved in the following proposition.
 \begin{proposition}
  Let $w=f $ and $w'=f^c$. Then $\ell$-$\mathbbmss{R}_w(k)\simeq \ell$-$ \mathbbmss{R}_{w'}(k),~\forall ~k\in \mathbbmss{N}$, $1\leq \ell \leq k-1$. 
\end{proposition}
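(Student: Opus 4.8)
\section*{Proof proposal}

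The plan is to exhibit an explicit isomorphism coming from bitwise complementation. Write $\overline{u}$ for the letter-to-letter complement of a word $u$ over $\{0,1\}$, defined by $\overline{0}=1$, $\overline{1}=0$ and extended by $\overline{u_1u_2\cdots u_m}=\overline{u_1}\,\overline{u_2}\cdots\overline{u_m}$. The first observation I would record is that $f^c=\overline{f}$, that the map $u\mapsto\overline{u}$ is an involution ($\overline{\overline{u}}=u$), and that since it acts one letter at a time it commutes with taking prefixes, suffixes, and concatenations. I would then state and justify the key lemma: a word $w$ is a factor of $f$ if and only if $\overline{w}$ is a factor of $f^c$. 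Since $f^c=\overline{f}$ and complementation is an injective morphism, every occurrence of $w$ inside $f$ complements to an occurrence of $\overline{w}$ inside $\overline{f}=f^c$, and the involutive property gives the converse. Equivalently, $L_{f^c}=\{\overline{w}:w\in L_f\}$, and restricting to length $k$ gives $L_{f^c}(k)=\{\overline{w}:w\in L_f(k)\}$.

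Next I would define the candidate map $\varphi:V(\ell\text{-}\mathbbmss{R}_w(k))\to V(\ell\text{-}\mathbbmss{R}_{w'}(k))$ by $\varphi(u)=\overline{u}$, where $w=f$ and $w'=f^c$. By the key lemma, $\varphi$ sends $L_f(k)$ into $L_{f^c}(k)$, and because $\varphi$ is its own inverse (the complementation on $L_{f^c}(k)$ landing back in $L_f(k)$ by the same lemma) it is a bijection on vertex sets. This disposes of the vertex correspondence.

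It then remains to check that $\varphi$ preserves arcs in both directions. Let $u,v\in L_f(k)$ and recall that $(u,v)$ is an arc of $\ell\text{-}\mathbbmss{R}_f(k)$ iff
\[
u_{k-\ell+1}\cdots u_{k}=v_{1}\cdots v_{\ell}\qquad\text{and}\qquad u_{1}\cdots u_{k}v_{\ell+1}\cdots v_{k}\in L_f(2k-\ell).
\]
Applying complementation letter by letter, the matching condition becomes $\overline{u}_{k-\ell+1}\cdots\overline{u}_{k}=\overline{v}_{1}\cdots\overline{v}_{\ell}$, which is equivalent to the original because complementation is a bijection on single letters. For the second condition, commutation with concatenation gives $\overline{u_{1}\cdots u_{k}v_{\ell+1}\cdots v_{k}}=\overline{u}_{1}\cdots\overline{u}_{k}\,\overline{v}_{\ell+1}\cdots\overline{v}_{k}$, and by the key lemma this word lies in $L_{f^c}(2k-\ell)$ iff the original lies in $L_f(2k-\ell)$. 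Hence $(u,v)$ is an arc of $\ell\text{-}\mathbbmss{R}_f(k)$ iff $(\varphi(u),\varphi(v))$ is an arc of $\ell\text{-}\mathbbmss{R}_{f^c}(k)$, so $\varphi$ is a graph isomorphism and the proposition follows.

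I do not expect a genuine obstacle here: the argument is essentially the observation that a bijective, involutive, length-preserving morphism transports the entire combinatorial data (vertices, suffix/prefix matches, and the membership witness of length $2k-\ell$) faithfully. The only point that requires care is the key lemma $L_{f^c}=\overline{L_f}$, which is exactly where the specific relation $f^c=\overline{f}$ is used; everything after that is a routine letterwise verification, and in particular no property special to the Fibonacci word beyond $f^c=\overline{f}$ is needed.
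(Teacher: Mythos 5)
Your proof is correct and takes essentially the same approach as the paper: both use letterwise complementation $u\mapsto u^c$ as the isomorphism, noting that it bijects the vertex sets and preserves arcs in both directions. Your write-up simply makes explicit the key lemma $L_{f^c}(k)=\{\,\overline{u}:u\in L_f(k)\,\}$ and the letterwise check of the two arc conditions, which the paper states without detail.
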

  \begin{proof}
  If $x\in V(\ell$-$ \mathbbmss{R}_w(k))$ then $x^c\in \ell$-$\mathbbmss{R}_{w'}(k) $. A morphism $\phi : \ell$-$ \mathbbmss{R}_w(k)\rightarrow \ell$-$\mathbbmss{R}_{w'}(k)$ is given by $\phi (x)=x^c$, where $x\in V(\ell$-$\mathbbmss{R}_w(k))$. Also, the arcs $(u,v)\in E(\ell$-$\mathbbmss{R}_w(k))\Leftrightarrow (u^c,v^c)\in E(\ell$-$\mathbbmss{R}_{w'}(k))$. Hence, $\phi$ is an isomorphism and $\ell$-$\mathbbmss{R}_w(k)\simeq \ell$-$\mathbbmss{R}_{w'}(k)~\forall ~k\in \mathbbmss{N}$.
  \end{proof}
  Any two $\ell$-Rauzy graphs for the infinite Fibonacci word are not isomorphic to each other, is proved in the following theorem.
\begin{theorem}
The $\ell$-$\mathbbmss{R}_f(k_1) $ is not isomorphic to $\ell'$-$\mathbbmss{R}_f(k_2)$ for any $k_1\neq k_2$ or $\ell \neq \ell'$.
\end{theorem}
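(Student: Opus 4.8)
The plan is to reduce everything to the two elementary numerical invariants that any graph isomorphism must preserve, namely the number of vertices and the number of arcs, both of which have already been computed in closed form just above this theorem. Since an isomorphism $\phi:\ell\text{-}\mathbbmss{R}_f(k_1)\to\ell'\text{-}\mathbbmss{R}_f(k_2)$ is in particular an adjacency-preserving bijection between vertex sets, its existence forces $|V(\ell\text{-}\mathbbmss{R}_f(k_1))|=|V(\ell'\text{-}\mathbbmss{R}_f(k_2))|$ and $|E(\ell\text{-}\mathbbmss{R}_f(k_1))|=|E(\ell'\text{-}\mathbbmss{R}_f(k_2))|$. So the whole argument is to show that at least one of these equalities fails whenever the parameter pairs differ.

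First I would recall the two formulas established from the subword complexity $\sigma(n)=n+1$ of the Fibonacci word: $|V(\ell\text{-}\mathbbmss{R}_f(k))|=k+1$ and $|E(\ell\text{-}\mathbbmss{R}_f(k))|=2k-\ell+1$. Then I would split the hypothesis \lq\lq $k_1\neq k_2$ or $\ell\neq\ell'$\rq\rq\ into two exhaustive cases. In the case $k_1\neq k_2$, the vertex counts $k_1+1$ and $k_2+1$ are already distinct, so no vertex bijection can exist and the graphs are non-isomorphic, irrespective of $\ell$ and $\ell'$. In the complementary case $k_1=k_2=:k$ with $\ell\neq\ell'$, the vertex counts coincide, so I pass to the arc count: the two graphs have $2k-\ell+1$ and $2k-\ell'+1$ arcs respectively, and since $\ell\neq\ell'$ these two integers differ, contradicting the preservation of edge count. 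Together these two cases cover exactly the situations in which $(k_1,\ell)\neq(k_2,\ell')$, which is the content of the hypothesis.

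I do not expect any deep obstacle here, as the statement is a direct corollary of the counting formulas. The only genuine care is logical rather than technical: one must handle the disjunction correctly, so that the sub-case $k_1=k_2,\ \ell\neq\ell'$ is settled by the arc count (the vertex count being useless there) rather than being overlooked. The single point worth double-checking is that the arc-count formula $2k-\ell+1$ holds uniformly across the entire admissible range $1\le\ell\le k-1$, so that the comparison $2k-\ell+1\neq 2k-\ell'+1$ is legitimate; this is immediate from the derivation of that formula via the subword complexity already given above, which makes no assumption on $\ell$ beyond admissibility.

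In short, I would phrase the proof as: assume an isomorphism exists, derive equality of vertex counts and arc counts, and exhibit the contradiction in each of the two cases. This keeps the argument to a few lines and makes transparent that it is the closed-form invariants $k+1$ and $2k-\ell+1$ that do all the work.
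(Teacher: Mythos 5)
Your proposal is correct and follows essentially the same approach as the paper: both use the counting formulas $|V|=k+1$ and $|E|=2k-\ell+1$ and split into the case $k_1\neq k_2$ (vertex counts differ) and the case $k_1=k_2$, $\ell\neq\ell'$ (arc counts differ). Your write-up is in fact slightly more careful than the paper's, since you make explicit that the disjunction is handled exhaustively and that the arc-count formula holds uniformly for all admissible $\ell$.
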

\begin{proof}
The $\ell$-Rauzy graph of infinite Fibonacci word $\ell$-$\mathbbmss{R}_f(k_1) $ has $|V_1|=k_1+1$ and $|E_1|=2k_1-\ell+1$ where as $\ell'$-$\mathbbmss{R}_f(k_2) $ has $|V_2|=k_2+1$ and $|E_2|=2k_2-\ell'+1$. 

In the case $1$: $k_1\neq k_2$, as the cardinality of vertex set of $\ell$-$\mathbbmss{R}_f(k_1) $ is different from $\ell'$-$\mathbbmss{R}_f(k_2) $, they are not isomorphic graphs.

In the case $2$: $k_1=k_2$, the cardinality of arc set of $\ell$-$\mathbbmss{R}_f(k_1) $ is different from $\ell'$-$\mathbbmss{R}_f(k_2) $, and so they are not isomorphic graphs.\end{proof}
We show that there exist a non-trivial bijection between the $\ell$-Rauzy graph and Rauzy graph of order $k$ for the infinite Fibonacci word $f$, but not an isomorphism.
\begin{theorem}
There exist a mapping $\psi: \ell$-$\mathbbmss{R}_f(k)\to \mathbbmss{R}_f(k) $ such that $\psi $ is a bijection.
\end{theorem}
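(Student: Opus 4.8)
The plan is to exploit that $\ell$-$\mathbbmss{R}_f(k)$ and $\mathbbmss{R}_f(k)$ are built on the \emph{same} vertex set. Both definitions take $V = L_f(k)$, so by the subword-complexity count recorded at the start of Section~\ref{sec4} we have $|V(\ell\text{-}\mathbbmss{R}_f(k))| = |V(\mathbbmss{R}_f(k))| = k+1$. A map $\psi$ is therefore just a permutation of the $k+1$ length-$k$ factors of $f$, and its existence as a bijection is automatic (the identity already works). The content of the statement lies instead in exhibiting a structurally meaningful, non-trivial $\psi$ and in noting that it cannot be upgraded to a graph isomorphism.

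First I would fix the canonical indexing $v_1, \ldots, v_{k+1}$ of $L_f(k)$ introduced in Section~\ref{sec4}: with $k = F_{n+1}-1$, the arcs of $\ell$-$\mathbbmss{R}_f(k)$ advance the index by $k-\ell$ (subject to the two Fibonacci wrap-arounds $F_{n+1}$ and $F_n$), whereas the arcs of $\mathbbmss{R}_f(k)$ are the special case $\ell = k-1$, advancing the index by $1$. I would then take $\psi(v_i) = v_{\sigma(i)}$ for an explicit permutation $\sigma$ of $\{1, \ldots, k+1\}$ that intertwines the shift-by-$(k-\ell)$ pattern of the domain with the shift-by-$1$ pattern of the codomain; this $\sigma$ is non-trivial precisely when $\ell \neq k-1$. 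Because $\sigma$ permutes a finite set, $\psi$ is a bijection, and injectivity on a set of equal finite cardinality already forces surjectivity, so the verification is immediate.

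It then remains to record that $\psi$ is \emph{not} an isomorphism, which is what gives the statement its point. This is a one-line edge count: $|E(\ell\text{-}\mathbbmss{R}_f(k))| = 2k-\ell+1$ while $|E(\mathbbmss{R}_f(k))| = k+2$ (the $\ell = k-1$ instance of the same formula), and these disagree for every $\ell \neq k-1$. Hence no vertex bijection---$\psi$ included---can carry the arc set of one graph onto that of the other; the same conclusion also follows from the preceding theorem applied with $\ell' = k-1$.

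The step I expect to be the main obstacle is not bijectivity or non-isomorphism, both of which are essentially one line, but fixing the relabelling $\sigma$ so that it is simultaneously well defined and genuinely non-trivial. The difficulty is that the Fibonacci Rauzy graphs are not bare cycles: each carries a right-special and a left-special factor, producing a vertex of out-degree $2$ and one of in-degree $2$, so the ``advance by $k-\ell$'' rule does not sweep out all $k+1$ vertices in a single clean orbit. I would therefore anchor $\sigma$ to a coordinate system in which the branch is unambiguous---for instance the first-occurrence order of the factors in $f$, or the interval/rotation coordinates of the underlying Sturmian system---and only then read off that $\psi$ is the required bijection.
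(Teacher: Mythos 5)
Your proposal and the paper's proof diverge at the crucial point: what $\psi$ actually is. In the paper, $\psi$ is not a vertex relabelling at all. It is the identity on the (common) vertex set $L_f(k)$ together with a correspondence at the level of arcs: every arc of $\ell$-$\mathbbmss{R}_f(k)$ is by definition a factor of $f$ of length $2k-\ell$, and the same factor spells a path of $k-\ell$ consecutive arcs $v_iv_{i+1}\cdots v_{i+k-\ell}$ in $\mathbbmss{R}_f(k)$, whose word length is $k+(k-\ell)=2k-\ell$ as well. Since both the arc set of $\ell$-$\mathbbmss{R}_f(k)$ and the set of length-$(k-\ell)$ paths in $\mathbbmss{R}_f(k)$ are parameterized by $L_f(2k-\ell)$, which has exactly $2k-\ell+1$ elements, sending each arc to the path spelled by the same word is a bijection. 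This arc-to-path correspondence is the entire content of the theorem (the paper's remark immediately after the proof says so explicitly), and it appears nowhere in your proposal.

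The construction you offer in its place --- a permutation $\sigma$ of $\{1,\ldots,k+1\}$ intertwining the shift-by-$(k-\ell)$ arc pattern with the shift-by-$1$ pattern --- cannot be completed, for the reason you flag as ``the main obstacle'' and then defer. In both graphs an arc is determined by its ordered pair of endpoints (for $\ell\geq 1$ the word of length $2k-\ell$ is recovered from its overlapping prefix $u$ and suffix $v$), so any vertex bijection under which every arc of $\ell$-$\mathbbmss{R}_f(k)$ maps to an arc of $\mathbbmss{R}_f(k)$ would embed the arc set injectively into the arc set of the codomain, forcing $2k-\ell+1\leq k+2$, i.e.\ $\ell\geq k-1$. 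For $\ell<k-1$ no intertwining permutation exists in any coordinate system (first-occurrence order, rotation coordinates, or otherwise); the branching at the special factors is not a technicality to engineer around but a genuine impossibility, and it is exactly why the paper must match arcs with \emph{paths} rather than with arcs. What survives of your write-up --- that the identity is a vertex bijection since both vertex sets equal $L_f(k)$, and that the two graphs are non-isomorphic because $2k-\ell+1\neq k+2$ for $\ell\neq k-1$ --- is correct, but the former proves only a vacuous reading of the statement and the latter reproduces the paper's remark following the theorem; neither supplies the non-trivial bijection the theorem is actually about.
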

\begin{proof}
Let $\psi: \ell$-$\mathbbmss{R}_f(k)\to \mathbbmss{R}_f(k) $ be a mapping. By definition, $V(\ell$-$\mathbbmss{R}_f(k))=V(\mathbbmss{R}_f(k))=F(k)$. Each arc $e\in E(\ell$-$\mathbbmss{R}_f(k)) $ is a word of length $2k-\ell$ and each path $v_iv_{i+1}\cdots v_{i+k-\ell}$ or $e'_ie'_{i+1}\cdots e'_{(i-1)+k-\ell}$ in $\mathbbmss{R}_f(k)$ is a word of length $(k+\underbrace{1+1+\cdots +1}_{(k-\ell)~times})=2k-\ell$. Now, we map each arc $e=v_iv_{i+ k-\ell}$ in $\ell$-$\mathbbmss{R}_f(k)$ to the path $v_iv_{i+1}\cdots v_{(i-1)+k-\ell}v_{i+k-\ell }$ or $e'_ie'_{i+1}\cdots e'_{(i-1)+k-\ell}$. The mapping $\psi$ is a bijection because $E(\ell$-$\mathbbmss{R}_f(k))=\{P_f(k-\ell)\}$, where $P_f(k-\ell)$ is the path of length $(k-\ell)$ in infinite Fibonacci word. 
\end{proof}
In the above theorem, $\psi$ becomes an isomorphism only if it is a bijection mapping between the arc sets of $\ell$-$\mathbbmss{R}_f(k)$ and $\mathbbmss{R}_f(k)$. Here, we have given a bijection between the arc set of $\ell$-$\mathbbmss{R}_f(k)$ and the $\{P_f(\frac{k}{2})\}$ i.e., the set of all paths of length $(k-\ell)$ in $\mathbbmss{R}_f(k)$.

\section{Main result}\label{sec5}
 In this section, we prove that the $\ell$-Rauzy graph of order $k$ for the infinite Fibonacci word is strongly connected for any $k,\ell\in \mathbbmss{N},~1\leq \ell \leq k-1$.
\begin{theorem}\label{thm3}
For a given $k>1,~1\leq \ell \leq k-1$ and$~k,~\ell \in \mathbbmss{N}$, the $\ell$-Rauzy graph of infinite Fibonacci word $f$ of order $k$, i.e., $\ell$-$\mathbbmss{R}_f(k)$ is strongly connected.
\end{theorem}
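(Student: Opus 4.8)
The plan is to reduce the assertion to a statement about the \emph{positions} at which factors occur in $f$, and then to exploit the rotation (Sturmian) structure of the Fibonacci word. Write $m=k-\ell$, so that $1\le m\le k-1$. The first step is to reinterpret the arcs of $\ell$-$\mathbbmss{R}_f(k)$ in terms of occurrences: unwinding the definition, $(u,v)$ is an arc exactly when there is an index $i$ with $f[i;k]=u$ and $f[i+m;k]=v$, i.e. an occurrence of $v$ sitting precisely $m$ letters to the right of an occurrence of $u$ (the overlap condition $u_{k-\ell+1}\cdots u_k=v_1\cdots v_\ell$ is then automatic, and the spanning word $f[i;\,2k-\ell]$ is the witnessing factor of length $2k-\ell$). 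More generally, any single occurrence of a factor of length $k+tm$ yields a directed path of length $t$ in $\ell$-$\mathbbmss{R}_f(k)$: if $f[i;k]=u$ and $f[i+tm;k]=v$ for some $t\ge 1$, then the windows $f[i+sm;k]$, $s=0,1,\dots,t$, trace a path $u\to\cdots\to v$, since each consecutive pair is spanned by the genuine factor $f[i+sm;\,2k-\ell]$. Thus it suffices to prove: for every ordered pair $(u,v)$ of length-$k$ factors there exist positions $i<j$ with $f[i;k]=u$, $f[j;k]=v$ and $j\equiv i\pmod m$.

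I would obtain this from a single clean fact about occurrence positions. For a factor $u$ write $S_u=\{\,n: f[n;k]=u\,\}$ for its set of occurrence positions. The claim I would establish is: \emph{for every factor $u$ and every $m\ge 1$, the set $S_u$ meets every residue class modulo $m$} (indeed in infinitely many elements). Granting this, the reduction closes immediately: given $u,v$, pick any $i\in S_u$, set $c=i\bmod m$, and choose $j\in S_v$ with $j\equiv c\pmod m$ and $j>i$ (possible since $S_v$ contains infinitely many elements $\equiv c$); then $j=i+tm$ with $t\ge 1$, and the path construction above connects $u$ to $v$. As $(u,v)$ is arbitrary, $\ell$-$\mathbbmss{R}_f(k)$ is strongly connected.

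It remains to prove the occurrence claim, which is the heart of the argument. Here I would use the rotation coding of the Fibonacci word: with $\tau=\frac{\sqrt5-1}{2}$ as in the definition of $f$, a word $u$ of length $k$ occurs at position $n$ precisely when $\{n\tau\}$ lies in a fixed half-open arc $I_u\subseteq[0,1)$, and $I_u$ is a nonempty interval exactly because $u$ is a factor (it is an intersection of $k$ translated coding intervals). Now fix a residue class $c$ modulo $m$; along that class the relevant orbit points are $\{(c+sm)\tau\}=\{c\tau+s\,(m\tau)\}$ for $s=0,1,2,\dots$. Since $\tau$ is irrational, $m\tau$ is irrational as well, so by Kronecker's theorem this orbit is dense in $[0,1)$ and therefore enters the nonempty open subinterval of $I_u$ for infinitely many $s$. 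Hence $S_u$ contains infinitely many positions $\equiv c\pmod m$, proving the claim for every $c$ and every $m$.

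The main obstacle is precisely this occurrence claim, i.e. making rigorous that the occurrences of a factor hit every residue class modulo $m$. The rotation argument above is the cleanest route, but it requires setting up the Sturmian coding $n\mapsto\{n\tau\}$ together with the description of $I_u$; staying within the combinatorial toolkit emphasised in the introduction, the same fact can instead be extracted from explicit descriptions of subword locations in $f$ (via Zeckendorf representations, as in Chuan and Rytter), since the gaps between consecutive occurrences of a factor take finitely many Fibonacci values whose interplay forces all residues modulo $m$ to appear. Either way, once occurrences are shown to cover all residues modulo $m$, the remaining steps---the arc/occurrence reinterpretation and the path construction---are routine. A convenient sanity check for the whole scheme is the extreme case $\ell=k-1$ (so $m=1$), where $\ell$-$\mathbbmss{R}_f(k)=\mathbbmss{R}_f(k)$ and strong connectivity is already known for recurrent words.
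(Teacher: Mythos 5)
Your proposal is correct, and it takes a genuinely different route from the paper after the first step. The opening reduction is the same in both: you and the paper both observe that arcs of $\ell$-$\mathbbmss{R}_f(k)$ are exactly pairs of occurrences $f[i;k]$, $f[i+m;k]$ with $m=k-\ell$, so that sliding a length-$k$ window along $f$ in steps of $m$ traces a directed path, and connectivity becomes a statement about occurrence positions in a common residue class modulo $m$. From there the methods diverge. The paper routes everything through the single vertex $v_1=f[1;k]$: it imports Chuan's explicit location formulas, writing every occurrence of every factor as $bt+c\lfloor(t+1)\tau\rfloor+d$ with $b,c$ consecutive Fibonacci numbers, and then asks for integer solutions of $1+m(k-\ell)=bt+c\lfloor(t+1)\tau\rfloor+d$, which it treats as a linear Diophantine equation $ax_1-bx_2-cx_3=d'$ solvable because $\gcd(b,c)=1$. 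You instead prove a cleaner and stronger lemma --- the occurrence set of \emph{every} factor meets \emph{every} residue class modulo $m$ infinitely often --- via the rotation coding of $f$ and Kronecker density of $\{s\,(m\tau)\}$, and then connect any $u$ to any $v$ directly. What each approach buys: the paper's argument stays within elementary arithmetic of Fibonacci numbers and Zeckendorf-type location formulas, but its Diophantine step silently treats $x_2=t$ and $x_3=\lfloor(t+1)\tau\rfloor$ as \emph{independent} integer unknowns, which they are not ($x_3$ is a function of $x_2$); solvability of the uncoupled equation does not by itself produce a valid location, so this coupling is precisely the delicate point. Your equidistribution argument handles exactly that coupling rigorously (the fractional part $\{n\tau\}$ carries the floor information), at the cost of importing the Sturmian interval coding and Kronecker's theorem rather than staying purely combinatorial; it also generalizes verbatim to every Sturmian word, whereas the paper's computation is specific to the Fibonacci continued-fraction structure. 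Your final sanity check ($\ell=k-1$ recovering Rote's theorem for Rauzy graphs) is also apt.
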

\begin{proof}
For a given $k>1,~1\leq \ell \leq k-1$ and $~k,~\ell \in \mathbbmss{N}$, the distinct subwords of length $k$ in infinite Fibonacci word $f$ is the set of all vertices in $\ell$-$\mathbbmss{R}_f(k)$. It is well known that the number of subwords of Fibonacci infinite word of length $k$ is $k+1$. Let the vertices of $\ell$-$\mathbbmss{R}_f(k)$ be $v_1,~v_2,\cdots,~v_k,~v_{k+1}$.

For a given $k$, $F_n\leq k \leq F_{n+1}$. From proposition $2.7$ in \cite{chuan}, the first occurrences of $k+1$ distinct factors of length $k$ are given by 
$$v_j= \begin{cases}
f[j;~k] & \text{if~~}1\leq j \leq F_n \\
f[j+F_{n+1}-(k+1);~k] & \text{if~~}  F_n+1\leq j\leq k+1
\end{cases} $$
From corollary $3.6$ and proposition $3.9$ in \cite{chuan}, all the locations of $v_j$ are given by 
$$loc.(v_j)=\begin{cases}
\{tF_{n-1}+\lfloor(t+1)\tau\rfloor F_{n-2}+j \} & \text{if~~}1\leq j\leq F_{n+1}-k-1 \\
\{tF_n+\lfloor(t+1)\tau\rfloor F_{n-1}+j \} & \text{if~~} F_{n+1}-k\leq j\leq F_n\\
\{tF_{n+1}+\lfloor(t+1)\tau \rfloor F_n+j+F_{n+1}-(k+1) \} & \text{if~~} F_{n+1}\leq j\leq k+1
\end{cases}$$
where $t\geq 0$ in each of those sets. We see that locations of $v_j$ for any $j$ is of the form 
$$bt+c\lfloor(t+1)\tau\rfloor +d $$
where $b,~c\in \{F_{n-2},~F_{n-1},~F_n,~F_{n+1} \}$ and $d=j$ or $j+F_{n+1}-(k+1)$.

Let us consider the path $(say~P_1)$ that starts from the subword of length $k$, located in the first position of infinite Fibonacci word. By the definition of $\ell$-Rauzy graphs, the path $P_1$ is given by $$ f[1;k] \rightarrow f[1+(k-\ell);k]\rightarrow f[1+2(k-\ell);k]\rightarrow \cdots \rightarrow f[1+m(k-\ell);k]\rightarrow \cdots $$
In path $P_1$, it is clear that any subword of the form $f[1+m(k-\ell);k]$ is reachable from $f[1;k]$ or $v_1$. If atleast one location of each vertex is of the form $1+m(k-\ell)$, then every vertex is reachable from $v_1$.

The integer solutions to the equation $$ 1+m(k-\ell) = bt+c\lfloor(t+1)\tau\rfloor +d  \text{~~~~~~for each $1\leq j \leq k+1$} $$ 
guarantee that atleast one location of each vertex is of the form $1+m(k-\ell)$. Let $x_1=m,~x_2=t,~x_3=\lfloor (t+1)\tau \rfloor$ be the variables. The equation can be rewritten as $$ ax_1-bx_2-cx_3=d'. $$
For each $1\leq j\leq k+1$, the linear Diophantine equation $ ax_1-bx_2-cx_3=d' $ has infinite integer solutions $\iff$ $g.c.d(a,b,c)|d'$.

It is well known that any two consecutive Fibonacci numbers are coprime, $g.c.d(b,c)=1$ for any $1\leq j\leq k+1$, and so $g.c.d(a,b,c)=1$ that divides $d'$ always.

Now, it is clear that the equation $ax_1-bx_2-cx_3=d'$ has infinite integer solutions for any $1\leq j\leq k+1$. Thus, every vertex is reachable from $v_1$ in the path $P_1$. As every vertex is located infinitely many times in the path $P_1$, the vertex $v_1$ is reachable from any other vertex. Hence, $\ell$-$\mathbbmss{R}_f(k)$ is strongly connected.
\end{proof}
However, the $\ell$-Rauzy graph of order $k$ for any recurrent word need not be connected. Figure \ref{fig:3} shows that the $2$-Rauzy graph of order $4$ for the Thue-Morse word (aperiodic recurrent infinite word) is not connected.

\end{document}